\theoremstyle{plain}
\newtheorem{theorem}[subsection]{Theorem }
\newtheorem{definition}[subsection]{Definition }
\newtheorem{Pro}[subsection]{Proposition }
\newtheorem{remark}[subsection]{Remark}
\newtheorem{fact}[subsection]{Fact}
 \numberwithin{equation}{section}
\def\N{{\mathbb N}}
\def\S{{\mathbb S}}
\def\cont{\mathfrak c}
\begin{document}
\author[ D. Dikranjan,  E. Mart\' \i n-Peinador and V. Tarieladze]{\sc D. Dikranjan,  E. Mart\' \i n-Peinador and V. Tarieladze}

\title[Countable powers of compact abelian groups]{Countable powers of compact abelian groups in the uniform topology and cardinality of their dual groups}


\subjclass[2000]{Primary 54C40, 14E20; Secondary 46E25, 20C20}
\keywords{Character, dual group, uniform topology, connected group, precompact group, locally
quasi-convex group}

\begin{abstract}

For a topological Abelian group $X$ we consider in the group $X^{\N}$ the uniform topology and
study some properties of the obtained topological group. We show, in particular, that if $X=\mathbb S$ is the  circle group, then the group
$\mathbb S^{\N}$ endowed with the uniform topology has  dual group with cardinality $2^{\cont}$.

\end{abstract}
\maketitle
\

\section{Introduction}
For a set $A$ we denote by ${\rm{Card}}(A)$ or by $|A|$ the cardinality of $A$.
\par
For a topological space $A$ we denote by:
\begin{itemize}
\item
${ w}(A)$  the {\em weight} of $A$, i.e. the smallest cardinality of a base of A,
\item
${d}(A)$  the {\em density character}  of $A$, i.e., the smallest  cardinality of a dense subset of $A$,
\item
${ c}(A)$ the {\em cellularity} of $A$, i.e., the smallest cardinal $\kappa$ such that every family of
non-empty pairwise disjoint open sets has cardinality $\leq \kappa$.
\end{itemize}

All groups considered in this note will be Abelian.

Let   $G,Y$ be groups.  We denote by ${\rm{Hom}}(G,Y)$ the group of all group homomorphisms from
$G$ to $Y$. If $G,Y$ are topological groups,  ${\rm{CHom}}(G,Y) $ stands for the continuous elements  of ${\rm{Hom}}(G,Y)$.

\par
A set $\Gamma\subset {\rm{Hom}}(G,Y)$ will be called {\it separating}, if
$$
(x_1,x_2)\in G\times G,\,x_1\ne x_2\,\Longrightarrow \exists \gamma\in \Gamma,\,\gamma(x_1)\ne\gamma (x_2)\,.
$$
\par
In what follows the letter  $\mathbb S$ will stand for the multiplicative group of complex numbers of modulus one  endowed with the usual compact topology.
We write:

$$\S_+=\{s\in \S:\rm{Re}(s)\ge 0\}\,.$$
For a topological group $G$ we denote by $\mathcal N(G)$ the collection of all neighborhoods of the neutral element of $G$.
\par
A subset $K$ of a topological  group $G$ is called {\it precompact} if for every $V\in \mathcal N(G)$  there exists a finite non-empty $A\subset G$ such that $K\subset A+V(:=\{a+v:a\in A,v\in V\})$.
\par
A topological  group $G$ is called {\it  locally precompact} if  $\mathcal N(G)$ admits a basis consisting of precompact  subsets of $G$.
\par
For a group $G$ an element of ${\rm{Hom}}(G,\mathbb S)$ is called a ({\it  multiplicative}) {\it character}.
\par
For a topological  group $G$ we write
$$
G^\wedge:={\rm{CHom}}(G,\mathbb S)\,.
$$
The elements of $G^\wedge$ are called {\it  continuous characters} and $G^\wedge $ itself is {\it the (topological)  dual} of $G$.\\
Accordingly,
for a group $G$ and for a group topology $\tau$ in $G$ we denote by $(G,\tau)^\wedge$ the dual of the topological group $(G,\tau)$, i.e.,
$$
(G,\tau)^\wedge=\{{\chi\in \rm{Hom}}(G,\mathbb S): \chi\,\,\text{is}\,\,\tau-\text{continuous}\}\,.
$$
\par
A topological group $G$ is called {\it maximally almost periodic},
for short a MAP-group, if $G^\wedge $ is separating. It is known that every locally precompact Hausdorff topological  group is a MAP-group (this follows from the highly non-trivial  statement that every locally compact Hausdorff topological  group is a MAP-group \cite[Theorem 22.17]{HR}). It is relatively easy to show  that every discrete group (i.e. a group endowed with the discrete topology) is MAP; in this case the cardinality of the dual group can be calculated as follows. \begin{fact}\label{kak}{\em (\cite{K43}; see also \cite[(24.47)]{HR})}
Let $D$ be an infinite discrete group. Then $|D^\wedge|=2^{|D|}$.
\end{fact}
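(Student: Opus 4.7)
Set $\kappa = |D|$. The upper bound $|D^\wedge| \le 2^\kappa$ is immediate: since $D^\wedge \subseteq \S^D$, one has $|D^\wedge| \le |\S|^{|D|} = \cont^\kappa = 2^\kappa$ (using $\kappa \ge \aleph_0$).

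The substance lies in the lower bound $|D^\wedge| \ge 2^\kappa$. The fundamental tool is that $\S$, being divisible, is injective in the category of abelian groups, so $\mathrm{Hom}(-,\S)$ is an exact functor. From this one obtains two basic reductions: for every subgroup $A \le D$ the restriction $D^\wedge \to A^\wedge$ is surjective, whence $|D^\wedge| \ge |A^\wedge|$; and for every $N \le D$ the pull-back embeds $(D/N)^\wedge \hookrightarrow D^\wedge$, whence $|D^\wedge| \ge |(D/N)^\wedge|$. My plan is to combine these reductions with the structure theory of abelian groups to locate, inside $D$ or one of its quotients, a subgroup isomorphic to one of three ``model'' abelian groups whose dual is easy to compute and already has cardinality $2^\kappa$: the free abelian group $\Z^{(\kappa)}$, with dual $\S^\kappa$ of cardinality $\cont^\kappa = 2^\kappa$; an elementary abelian $p$-group $(\Z/p\Z)^{(\kappa)}$ for some prime $p$, of dual cardinality $p^\kappa = 2^\kappa$; or a divisible $p$-group $\Z(p^\infty)^{(\kappa)}$, with dual $\Z_p^\kappa$ of cardinality $\cont^\kappa = 2^\kappa$.

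To carry this out I would perform a case analysis on the torsion subgroup $tD$. If the torsion-free quotient $D/tD$ has cardinality $\kappa$, then for uncountable $\kappa$ the $\Z$-rank of $D/tD$ must equal $\kappa$ (any torsion-free group of rank $r$ has cardinality at most $\max(r,\aleph_0)$), so $D/tD$ contains $\Z^{(\kappa)}$; for $\kappa = \aleph_0$ the nonzero torsion-free quotient already contains $\Z$, and $\Z^\wedge = \S$ has cardinality $\cont = 2^{\aleph_0}$. If instead $|tD|=\kappa$, I decompose $tD = \bigoplus_p t_pD$ into primary components; when some $t_pD$ has cardinality $\kappa$, I split off its maximal divisible summand (which is a direct sum of copies of $\Z(p^\infty)$) and, from the reduced complement, apply Pr\"ufer's theorem on bounded reduced $p$-groups together with basic-subgroup techniques in the unbounded case, to locate either $(\Z/p\Z)^{(\kappa)}$ or $\Z(p^\infty)^{(\kappa)}$. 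Finally, in the residual situation where every $t_pD$ has cardinality $<\kappa$, necessarily $\kappa = \aleph_0$ with infinitely many $t_pD$ nontrivial, and then $(tD)^\wedge = \prod_p(t_pD)^\wedge$ is a product of infinitely many nontrivial finite or infinite factors, of cardinality $\ge 2^{\aleph_0} = 2^\kappa$.

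The principal obstacle I anticipate is the torsion $p$-group sub-case: after peeling off the maximal divisible summand and, from the reduced complement, the bounded part, one must guarantee by a counting argument that one of the model shapes absorbs the full cardinality $\kappa$. This is where the full strength of the structure theory of abelian $p$-groups (primary decomposition, divisible--reduced splitting, Pr\"ufer's theorem, basic subgroups) is needed. The remainder of the argument is routine bookkeeping with the two reductions above, aided by the elementary cardinal identities $\cont^\kappa = p^\kappa = 2^\kappa$ valid for every $\kappa \ge \aleph_0$.
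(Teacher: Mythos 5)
The paper offers no proof of this Fact at all --- it is quoted from Kakutani \cite{K43} and Hewitt--Ross (24.47) --- so there is nothing internal to compare against; I assess your argument on its own merits. The upper bound, the two reduction tools (surjectivity of restriction to subgroups, via injectivity of the divisible group $\S$, and the embedding of duals of quotients into the dual), the choice of model groups, and the skeleton of the case analysis are all correct and constitute the classical route to this theorem.

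There is, however, a genuine gap, and it sits exactly where you anticipate trouble: cardinals of countable cofinality. Your residual case asserts that if every $t_pD$ has cardinality $<\kappa$ while $|tD|=\kappa$, then ``necessarily $\kappa=\aleph_0$''. This is false: for $D=\bigoplus_{n\in\N}(\Z/p_n\Z)^{(\aleph_n)}$ one has $\kappa=|D|=\aleph_\omega$, yet every primary component has cardinality $\aleph_n<\kappa$. The same phenomenon defeats the pigeonhole implicit in your unbounded basic-subgroup sub-case: a basic subgroup $\bigoplus_{j}(\Z/p^j\Z)^{(\mu_j)}$ of cardinality $\kappa$ with $\mathrm{cf}(\kappa)=\aleph_0$ need not contain $(\Z/p^j\Z)^{(\kappa)}$ for any single $j$. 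Both defects are repaired by one observation that makes the hunt for a single model subgroup of full size $\kappa$ unnecessary: the dual of a direct sum is the product of the duals, and $\prod_i 2^{\lambda_i}=2^{\sum_i\lambda_i}$, so a direct sum of pieces whose dual cardinalities are $2^{\lambda_i}$ with $\sum_i\lambda_i=\kappa$ already has dual of cardinality $2^\kappa$. Alternatively, the entire $p$-primary case can be collapsed by passing to the socle: for a $p$-group $P$ the inequality $|P[p^{n+1}]|\le|P[p^n]|\cdot|P[p]|$ shows that $|P[p]|=|P|$ whenever the socle is infinite, so $P$ contains $(\Z/p\Z)^{(|P|)}$ outright; and a $p$-group with finite socle is a finite direct sum of cyclic and quasicyclic groups, hence, being infinite, contains $\Z(p^\infty)$ and has dual of cardinality at least $\cont=2^{\aleph_0}=2^{|P|}$. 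With these repairs your argument is complete.
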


Let us recall the definition of a locally quasi-convex group.

\begin{definition}\label{qcx} {\em \cite{Vil}}
 A subset $A$ of a {\bf topological} group $G$ is called {\bf quasi-convex} if for every $x\in G\setminus A$ there exists
$\chi\in G^\wedge$ such that
$$
\chi(A)\subset \S_+,\,\,\,\text{but}\,\,\chi(x)\not\in \S_+\,.
$$
 A { topological} group $G$ is called {\bf locally quasi-convex} if $\mathcal N(G)$ admits a basis consisting of quasi-convex subsets of $G$.
\end{definition}

Similar concepts (without a reference to \cite{Vil}) were defined later in \cite{Su}, where  the terms {\it   polar set} and {\it   locally polar group} are used instead of   'quasi-convex set'  and   'locally  quasi-convex group'. It is easy to see that every locally quasi-convex  Hausdorff topological abelian group is a MAP-group.
  For more information about  locally quasi-convex
 topological groups we refer \cite{A, B, CMPT}.
\par
The locally  precompact groups are a prominent class of locally quasi-convex groups.
\par
If $G$ is a   compact Hausdorff topological abelian group, then ${ w}(G)=\rm{Card}(G^{\wedge})$ \cite[Theorem 24.15]{HR}. In particular, an infinite    compact Hausdorff topological abelian group $G$ is metrizable iff $\rm{Card}(G^{\wedge})=\aleph_0$.
The following  statement characterizes the locally  precompact groups with countable dual.

\begin{Pro}\label{codu1}{\em \cite{DMT}}
For an infinite locally precompact Hausdorff topological abelian group $G$ TFAE:
\begin{itemize}
\item[(i)]  $G$ is precompact metrizable.
\item[(ii)] $G^{\wedge}$ is countably infinite.
\end{itemize}
\end{Pro}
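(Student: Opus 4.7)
The plan is to pass to the completion and then invoke Pontryagin duality together with the weight-dual formula for compact groups that the excerpt has already cited.

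\smallskip

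Let $\widetilde{G}$ denote the Ra\u\i kov completion of $G$. Since $G$ is a locally precompact Hausdorff abelian group, $\widetilde{G}$ is a locally compact Hausdorff abelian group, and $G$ is a dense subgroup of $\widetilde{G}$. Because $\mathbb S$ is complete, every $\chi\in G^\wedge$ extends uniquely and continuously to a character on $\widetilde{G}$; the restriction map is therefore a group isomorphism $\widetilde{G}^\wedge\to G^\wedge$. Moreover it is a homeomorphism when both sides carry the compact-open topology (precompact subsets of $G$ have compact closures in $\widetilde{G}$). So the whole argument reduces to proving the equivalence for the locally compact Hausdorff group $\widetilde{G}$, with the extra information that $G$ is precompact iff $\widetilde{G}$ is compact, and $G$ is metrizable iff $\widetilde{G}$ is metrizable (density is preserved under completion for metric/uniform structures, and $w(G)=w(\widetilde{G})$).

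\smallskip

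For the direction (i)$\Rightarrow$(ii): if $G$ is precompact and metrizable, then $\widetilde{G}$ is compact and metrizable, hence $w(\widetilde{G})=\aleph_{0}$. The Hewitt--Ross formula $w(\widetilde{G})=|\widetilde{G}^{\wedge}|$ cited in the excerpt (\cite[Theorem 24.15]{HR}) then gives $|G^{\wedge}|=|\widetilde{G}^{\wedge}|=\aleph_{0}$. It is countably \emph{infinite} because $\widetilde{G}$ itself is infinite, so by Pontryagin duality $\widetilde{G}^{\wedge}$ cannot be finite.

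\smallskip

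For the nontrivial direction (ii)$\Rightarrow$(i): assume $|G^{\wedge}|=|\widetilde{G}^{\wedge}|=\aleph_{0}$. Equipped with the compact-open topology, $\widetilde{G}^{\wedge}$ is a locally compact Hausdorff group. A countably infinite locally compact Hausdorff space must be discrete: by the Baire category theorem some singleton $\{\chi_{0}\}$ is open, and then translations make every singleton open. Thus $\widetilde{G}^{\wedge}$ is a discrete abelian group. Pontryagin--van Kampen duality now yields that $\widetilde{G}\cong(\widetilde{G}^{\wedge})^{\wedge}$ is compact (being the dual of a discrete group). Hence $G$ is precompact. Finally, using once more $w(\widetilde{G})=|\widetilde{G}^{\wedge}|=\aleph_{0}$ for the compact group $\widetilde{G}$, we conclude that $\widetilde{G}$, and therefore $G$, is metrizable.

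\smallskip

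The main potential obstacle is the passage to the completion: one must justify that $G^{\wedge}$ and $\widetilde{G}^{\wedge}$ coincide both as groups and, more importantly, as topological groups (in order to transfer local compactness). This is a standard fact for locally precompact groups, since precompact subsets of $G$ have compact closures in $\widetilde{G}$ and characters into the complete group $\mathbb S$ extend uniquely. Once this reduction is in place, the Baire-category argument that a countable locally compact Hausdorff group is discrete, followed by Pontryagin duality, finishes the proof cleanly.
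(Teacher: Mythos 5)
The paper does not actually prove Proposition~\ref{codu1}; it is quoted from \cite{DMT} with no argument supplied, so there is nothing in the text to compare your proof against line by line. Judged on its own, your argument is correct and complete. The reduction to the Ra\u\i kov completion is sound: local precompactness guarantees that $\widetilde{G}$ is locally compact (the closure in $\widetilde{G}$ of a precompact neighbourhood of $0$ in $G$ is a compact neighbourhood of $0$ in $\widetilde{G}$), and since $\mathbb S$ is complete and continuous characters are uniformly continuous, restriction gives a group isomorphism $\widetilde{G}^{\wedge}\to G^{\wedge}$ --- which is all you need for the cardinality count; the topological identification you mention is true but not actually used, since local compactness of $\widetilde{G}^{\wedge}$ comes from $\widetilde{G}$ being LCA, not from $G$. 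The two directions then go through as you say: (i)$\Rightarrow$(ii) is the Hewitt--Ross formula $w(\widetilde{G})=|\widetilde{G}^{\wedge}|$ plus the observation that an infinite compact group cannot have finite dual; (ii)$\Rightarrow$(i) is the Baire-category fact that a countable locally compact Hausdorff \emph{group} (homogeneity is essential here, as you correctly note --- a countable locally compact Hausdorff \emph{space} need not be discrete) is discrete, followed by Pontryagin--van Kampen duality to get compactness of $\widetilde{G}$ and the weight formula again for metrizability. This is the standard route one would expect for this statement, and every auxiliary fact you invoke is either cited in the paper (Theorem 24.15 of \cite{HR}) or classical.
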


 The implication $(ii)\Longrightarrow (i)$ of Proposition \ref{codu1} may fail if $G$ is
 a locally quasi-convex Hausdorff group, which is not locally precompact (see Proposition \ref{dikrfor2}).

 In Section \ref{Sec1} we introduce the group of sequences ${X}^\mathbb N$ for a  topological group $X$ and
 its uniform topology $\mathfrak{u}$.  The following statement is the main result of this note.

 \begin{theorem}\label{me-th}
 Let $X\ne \{0\}$ be a compact Hausdorff topological abelian group and $G=({X}^\mathbb N,\mathfrak{u})$.
Then
\begin{itemize}
\item[$(a)$] $|{\rm{CHom}} (G,X)|\ge 2^{\cont}$.
\item[$(b)$]  If $|X|\leq 2^{\cont}$ (in particular, if $X$ is separable), then  $|{\rm{CHom}} (G,X)|= 2^{\cont}$.
\item[$(c)$] If $X={\mathbb S}$, then $|G^{\wedge}|= 2^{\cont}$.
\end{itemize}

 \end{theorem}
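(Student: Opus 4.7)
The plan is to establish $(a)$ by ultrafilter limits on the sequences, to deduce $(b)$ from a dense-subgroup counting argument, and to read off $(c)$ as the specialization of $(a)$ and $(b)$ to $X=\S$.

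For $(a)$, the key fact I would exploit is that $|\beta\N|=2^{\cont}$. For every ultrafilter $\mathcal U$ on $\N$ define $\phi_{\mathcal U}\colon X^{\N}\to X$ by setting $\phi_{\mathcal U}((x_n))$ equal to the $\mathcal U$-limit of $(x_n)$; the limit exists and is unique since $X$ is compact Hausdorff, and it is a group homomorphism because the operation of $X$ is jointly continuous. For $\mathfrak{u}$-continuity, given $U\in\mathcal N(X)$ pick $V\in\mathcal N(X)$ with $\overline V\subseteq U$; then any element of $V^{\N}$ is mapped into $\overline V\subseteq U$. To see that distinct ultrafilters give distinct $\phi_{\mathcal U}$, fix $0\neq a\in X$ (available by the hypothesis $X\neq\{0\}$), pick $A\in\mathcal U_1\setminus\mathcal U_2$, and take the sequence $x_n=a$ for $n\in A$, $x_n=0$ otherwise; then $\phi_{\mathcal U_1}((x_n))=a$ while $\phi_{\mathcal U_2}((x_n))=0$. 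This exhibits $2^{\cont}$ distinct members of ${\rm{CHom}}(G,X)$.

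For $(b)$, a continuous map into the Hausdorff group $X$ is determined by its values on any dense subset of $G$, so $|{\rm{CHom}}(G,X)|\leq |X|^{d(G)}$. A natural dense subset in $G$ is $D_0^{\N}$ for any dense subgroup $D_0\subseteq X$: given a basic $\mathfrak{u}$-neighborhood $(x_n)+V^{\N}$ one chooses coordinatewise $y_n\in D_0\cap(x_n+V)$. Thus $d(G)\leq d(X)^{\aleph_0}$. The hypothesis $|X|\leq 2^{\cont}$ gives $w(X)\leq |X|\leq 2^{\cont}$ (since $w\leq |X|$ for infinite compact Hausdorff spaces), whereupon the classical inequality $d(X)\leq \log w(X)$ for compact abelian groups yields $d(X)\leq\cont$, and hence $d(G)\leq\cont^{\aleph_0}=\cont$. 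Therefore $|{\rm{CHom}}(G,X)|\leq (2^{\cont})^{\cont}=2^{\cont}$, which combined with $(a)$ gives equality. When $X$ is separable the estimate shortens: $d(X)=\aleph_0$ gives $d(G)\leq\aleph_0^{\aleph_0}=\cont$ directly. Finally, $(c)$ follows from $(a)$ and $(b)$ applied to the separable group $X=\S$, using $G^\wedge={\rm{CHom}}(G,\S)$.

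The main delicate point I foresee is the density bound $d(X)\leq\cont$ when $|X|\leq 2^{\cont}$, which rests on the standard but nontrivial fact that every compact abelian group admits a dense subgroup of cardinality $\log w(X)$. Everything else is either routine topology (the $\overline V\subseteq U$ argument plus the dense-subset principle for continuous Hausdorff-valued maps) or the classical count $|\beta\N|=2^{\cont}$.
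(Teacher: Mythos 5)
Your proposal is correct and follows essentially the same route as the paper: part $(a)$ via ultrafilter limits $\phi_{\mathcal U}$ with the identical continuity and separation arguments, and part $(b)$ via the chain $|{\rm{CHom}}(G,X)|\le |X|^{d(G)}$, $d(G)\le d(X)^{\aleph_0}$, $d(X)\le\cont$. The only cosmetic difference is that you quote $d(X)\le\log w(X)$ for compact groups where the paper derives the same bound from the Ivanovskii--Kuzminov dyadicity theorem together with $d(\{0,1\}^{2^{\cont}})\le\cont$; these rest on the same underlying facts.
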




\bigskip

The proof of this theorem is given in Section 3.
\begin{remark}{\em In connection with Theorem \ref{me-th} we note that in \cite{FK} it is  presented the first example of a Banach space $G$ over $\mathbb R$ with the following properties: $|G|=\cont$ and $|{\rm{CHom}} (G,\mathbb R)|= 2^{\cont}$.
}
\end{remark}

\section{Two  groups of  sequences and their uniform topology}\label{Sec1}

Let $X$ be a set. As usual, $X^\mathbb N$ will denote the set of all sequences ${\bf x}=(x_n)_{n\in \mathbb N}$ of elements of
$X$.
\par
If $X$ is a group with the neutral element $\theta$, then
\par
$X^{(\mathbb N)}$ will stand for the subgroup of $X^\mathbb N$
consisting of all sequences from $X^\mathbb N$, which eventually
equal to $\theta$.
 If $X$ is a topological group with the neutral element $\theta$, then
$$
 c_0(X):=\{ (x_n)_{n\in \mathbb N}\in  X^\mathbb N :\lim_nx_n=\theta\, \}.
 $$
\par
 Clearly $c_0(X)$ is a subgroup of $X^\mathbb N$, $X^{(\mathbb N)}\subseteq c_0(X)$ and $X^{(\mathbb N)}= c_0(X)$ iff $X$ has only trivial convergent sequences.

In what follows $X$ will be a fixed Hausdorff topological  group.
\par
We denote by $\mathfrak{p}_X$ the product topology in $X^{\N}$ and by $\mathfrak{b}_X$ the box topology in $X^{\N}$. It is easily verified that the collection
$$
\{ V^{\N}:V\in \mathcal N(X)\}
$$
 is a basis at $e:=(\theta,\theta,\dots)$ for a group topology in $X^{\N}$ which we denote by  $\mathfrak{u}_X$. 
 In all three cases we shall omit the subscript $_X$ when no confusion is likely.

   The topology  $\mathfrak{u}$ in $X^{\N}$ is nothing else but the topology of uniform convergence on $\N$ when the elements of $X^{\N}$ are viewed
   as functions from $\N$ to $X$ and $X$ is considered as a uniform space with respect to its left (=right) uniformity \cite{Bour}. So it will be called  {\it the uniform  topology}.  Since it  plays an important role in the sequel, we give in the next proposition an account of its main properties.

 We write:
$$
\mathfrak{p}_0:=\mathfrak{p}|_{c_0(X)}\;\; , \;\;\mathfrak{b}_0:=\mathfrak{b}|_{c_0(X)}\;\; \mbox{ and }\;\; \mathfrak{u}_0:=\mathfrak{u}|_{c_0(X)}.
$$

 \begin{Pro}\label{descr1} {\em \cite{DMT}} Let $(X,+)$ be a Hausdorff topological abelian group.
 \begin{itemize}
   \item[(a)]  The uniform topology $\mathfrak{u}$    is a Hausdorff group topology in $X^\mathbb N$ with $\mathfrak{p}\le \mathfrak{u}\le \mathfrak{b}$. Moreover,
     \begin{itemize}
         \item[(a$_1$)]  $\mathfrak{p}|_{X^{(\mathbb N)}}=\mathfrak{u}|_{X^{(\mathbb N)}}\,\Longleftrightarrow\, X=\{0\}$.
         \item[(a$_2$)] $\mathfrak{u}|_{X^{(\mathbb N)}}=\mathfrak{b}|_{X^{(\mathbb N)}} \Longrightarrow\,
          X\,\text{is\, a \rm{P}-group}\Longrightarrow\,\mathfrak{u}=\mathfrak{b}$; in particular, if $X$ is metrizable and $\mathfrak{u}|_{X^{(\mathbb N)}}=\mathfrak{b}|_{X^{(\mathbb N)}}$, then $X$ is discrete.
      \end{itemize}
   \item[(b)]  The passage from $X$ to $(X^\mathbb N,\mathfrak{u})$ preserves (sequential) completeness,  metrizability, MAP and local quasi-convexity.
   \item[(c)]   If $X\ne\{0\}$ and $G:=(X^\N,\mathfrak{u})$, then:
      \begin{itemize}
        \item[(c$_1$)]  $c(G)\ge \cont$, in particular $G$ is not separable;
 \item[(c$_2$)]  $(X^{(\N)}, \mathfrak{u}|_{X^{(\N)}})$ is not precompact and hence, $(c_0(X),\mathfrak{u}_0)$ and  $(X^\mathbb N,\mathfrak{u})$ are not precompact.
       \end{itemize}
\end{itemize}
   \end{Pro}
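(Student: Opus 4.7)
The plan is to handle the four parts essentially independently, relying throughout on two observations: each coordinate projection $\pi_n\colon(X^{\N},\mathfrak{u})\to X$ is continuous (because $\mathfrak{p}\le\mathfrak{u}$), and for any $x\in X$ the finite-support sequence $x\cdot\mathbf 1_n\in X^{(\N)}$, having value $x$ in position $n$ and $\theta$ elsewhere, serves as a convenient test element.

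For (a) I would verify that $\{V^{\N}:V\in\mathcal N(X)\}$ satisfies the standard neighborhood-basis axioms at $e$ for a group topology, using $V_1^{\N}\cap V_2^{\N}=(V_1\cap V_2)^{\N}$, $(-V)^{\N}=-V^{\N}$, and $V^{\N}+V^{\N}\subseteq(V+V)^{\N}$; Hausdorffness and $\mathfrak{p}\le\mathfrak{u}\le\mathfrak{b}$ are then immediate from the definitions. For (a$_1$), when $X\ne\{0\}$ I pick $x\ne\theta$ and $V\in\mathcal N(X)$ with $x\notin V$: the elements $x\cdot\mathbf 1_n$ lie in every $\mathfrak{p}|_{X^{(\N)}}$-neighborhood of $e$ for $n$ large enough, but never in $V^{\N}$. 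For (a$_2$), assuming $\mathfrak{u}|_{X^{(\N)}}=\mathfrak{b}|_{X^{(\N)}}$ and given any sequence $(V_n)\subseteq\mathcal N(X)$, the hypothesis yields $V\in\mathcal N(X)$ with $V^{\N}\cap X^{(\N)}\subseteq\bigl(\prod_n V_n\bigr)\cap X^{(\N)}$, and testing on $x\cdot\mathbf 1_n$ for $x\in V$ forces $V\subseteq V_n$ for every $n$; hence $\bigcap_n V_n\supseteq V\in\mathcal N(X)$, so $X$ is a $P$-group. The second implication is immediate: if $X$ is a $P$-group, every basic $\mathfrak{b}$-neighborhood $\prod_n V_n$ contains $W^{\N}$ with $W:=\bigcap_n V_n\in\mathcal N(X)$, forcing $\mathfrak{b}\le\mathfrak{u}$. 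The metrizable case reduces to the standard fact that a metrizable $P$-group is discrete.

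For (b), a countable base $\{V_n\}$ at $\theta$ in $X$ gives the countable base $\{V_n^{\N}\}$ at $e$ in $(X^{\N},\mathfrak{u})$, proving metrizability. Cauchy sequences and nets in $(X^{\N},\mathfrak{u})$ are precisely uniformly Cauchy families $\N\to X$, so (sequential or full) completeness of $X$ transfers directly. For MAP, the pullback $\chi\mapsto\chi\circ\pi_n$ sends separating characters of $X$ to characters of $(X^{\N},\mathfrak{u})$ that separate any two distinct sequences, which must differ at some coordinate. The same pullback shows that $V\subseteq X$ quasi-convex implies $V^{\N}$ quasi-convex in $(X^{\N},\mathfrak{u})$: given $\mathbf x\notin V^{\N}$, pick $n$ with $x_n\notin V$ and a witness $\chi\in X^{\wedge}$ satisfying $\chi(V)\subseteq\S_+$ and $\chi(x_n)\notin\S_+$; then $\chi\circ\pi_n\in(X^{\N},\mathfrak{u})^{\wedge}$ is the required witness.

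For (c), I fix $a\in X\setminus\{\theta\}$ and a symmetric $U\in\mathcal N(X)$ with $a\notin U+U$ (available from Hausdorffness by shrinking a symmetric neighborhood). For (c$_1$), to each $A\subseteq\N$ I associate the sequence $\mathbf x_A$ with value $a$ on $A$ and $\theta$ off $A$; then $\{\mathbf x_A+U^{\N}:A\subseteq\N\}$ is a family of $\cont$ pairwise disjoint non-empty open sets, since a common point of two such translates would force $\mathbf x_A-\mathbf x_B\in(U+U)^{\N}$, contradicting that the coordinate at any $n\in A\triangle B$ equals $\pm a\notin U+U$ (using symmetry of $U$). This yields $c(G)\ge\cont$ and in particular non-separability. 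For (c$_2$), the same disjointness argument shows each translate $\mathbf y+U^{\N}$ contains at most one $a\cdot\mathbf 1_n$, so no finite $F\subseteq X^{(\N)}$ satisfies $\{a\cdot\mathbf 1_n:n\in\N\}\subseteq F+U^{\N}$; since these test vectors lie in $X^{(\N)}$, non-precompactness passes up to $c_0(X)$ and $X^{\N}$. The only mildly delicate step I anticipate is the $P$-group derivation in (a$_2$), where the role of the finite-support witnesses is precisely to convert the equality of two topologies into the set-theoretic statement that countable intersections of neighborhoods are neighborhoods; everything else is either a pullback along $\pi_n$ or the disjointness calculation above.
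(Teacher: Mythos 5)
The paper does not actually prove Proposition \ref{descr1}: it is quoted verbatim from \cite{DMT} with no argument supplied here, so there is no in-text proof to compare yours against. Judged on its own, your proof is correct and uses exactly the arguments one would expect (and that the cited source uses): finite-support test sequences $x\cdot\mathbf 1_n$ for (a$_1$) and the P-group derivation in (a$_2$), pullbacks along the continuous projections $\pi_n$ for MAP and quasi-convexity in (b), and the $2^{\aleph_0}$ pairwise disjoint translates $\mathbf x_A+U^{\N}$ for (c). All the key steps check out: the neighborhood-basis axioms for $\{V^{\N}\}$, the reduction of $\mathfrak u|_{X^{(\N)}}=\mathfrak b|_{X^{(\N)}}$ to ``countable intersections of neighborhoods are neighborhoods,'' the quasi-convexity of $V^{\N}$ witnessed by $\chi\circ\pi_n$, and the disjointness calculation from $a\notin U+U$ with $U$ symmetric.

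One small imprecision worth fixing: in (c$_1$) the sets $\mathbf x_A+U^{\N}$ need not be $\mathfrak u$-open even when $U$ is open in $X$, since membership of $\mathbf y$ in $U^{\N}$ gives no uniform margin $V$ with $y_n+V\subseteq U$ for all $n$. This costs nothing: $U^{\N}$ is by construction a $\mathfrak u$-neighborhood of $e$, so replace each translate by $\mathbf x_A+\operatorname{int}_{\mathfrak u}(U^{\N})$; these are non-empty, open, and pairwise disjoint because the larger sets are, and the cellularity bound $c(G)\ge\cont$ follows as you intend. The same remark applies harmlessly to (c$_2$), where only the covering property of the neighborhoods $F+U^{\N}$ is used.
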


 \begin{remark}{\em The topology of $({\mathbb S}^\mathbb N,\mathfrak{u})$ can be induced by the invariant metric $\rho$ defined by the equality
$$
\rho({\bf x},{\bf y})=\sup_{n\in \mathbb N}|x_n-y_n|,\quad {\bf x},{\bf y}\in {\mathbb S}^\mathbb N\,.
$$
 The metric group $G:=({\mathbb S}^\mathbb N,\rho)$ was considered earlier in \cite[Example 4.2]{CoRo66}, where it was noted that $G$
  is not precompact, but every uniformly  continuous  $f:G\to \mathbb R$ is bounded. }
\end{remark}

\begin{Pro}\label{descr2m}{\em \cite{DMT}} Let $X$ be a Hausdorff topological abelian group.
 \begin{itemize}
   \item[(a)]  $(c_0(X), \mathfrak{u}_0)$  is a Hausdorff topological group  having as  a basis at zero the collection $\{ V^{\N}\cap c_0(X) :V\in \mathcal N(X)\}$.
   \item[(b)]  $\mathfrak{p}_0 \le \mathfrak{u}_0\le \mathfrak{b}_0$. Moreover, $\mathfrak{p}_0 =\mathfrak{u}_0\,\Longleftrightarrow\, X=\{0\}$; if $X$ is metrizable and $\mathfrak{u}_0 =\mathfrak{b}_0$, then $X$ is discrete.
   \item[(c)]   The passage from $X$ to $(c_0(X),\mathfrak{u}_0)$ preserves (sequential) completeness, metrizability, separability, MAP, local quasi-convexity,  non-discreteness, and connectedness.
  \end{itemize}
  \end{Pro}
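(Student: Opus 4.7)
The plan is to realize $(c_0(X),\mathfrak u_0)$ as a subgroup of $(X^\N,\mathfrak u)$ and inherit as much as possible from Proposition \ref{descr1}, then handle separately the two features of $c_0(X)$ not shared by $X^\N$: separability and connectedness. Part (a) is immediate since $\mathfrak u_0$ is by definition the subspace topology and the claimed basis at $e$ arises by intersecting each $V^\N$ with $c_0(X)$. For (b), the chain $\mathfrak p_0\le\mathfrak u_0\le\mathfrak b_0$ is the restriction of $\mathfrak p\le\mathfrak u\le\mathfrak b$ from Proposition \ref{descr1}(a). To separate $\mathfrak p_0$ from $\mathfrak u_0$ when $X\ne\{0\}$ I would exhibit, for small enough $V$, an element of any given basic $\mathfrak p_0$-neighborhood whose nonzero support lies at a coordinate not constrained by that neighborhood and whose value at that coordinate sits outside $V$. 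For the metrizable case of $\mathfrak u_0=\mathfrak b_0\Rightarrow X$ discrete, I would test the box set $\prod_n V_n\cap c_0(X)$ coming from a decreasing basis $(V_n)$ at $\theta$ against one-point-supported sequences $(\theta,\dots,\theta,x,\theta,\dots)$, forcing any trapping uniform neighborhood $W$ to satisfy $W\subset\bigcap_n V_n=\{\theta\}$.

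For part (c), the backbone is the observation that $c_0(X)$ is $\mathfrak u$-closed in $X^\N$: if $\mathbf x^{(k)}\in c_0(X)$ converges $\mathfrak u$-uniformly to $\mathbf x$, then choosing $V$ with $V+V\subset U$ and using $x_n^{(k)}\to\theta$ gives $x_n\in U$ for large $n$. Combined with Proposition \ref{descr1}(b), closedness transports (sequential) completeness and metrizability to $(c_0(X),\mathfrak u_0)$. MAP and local quasi-convexity transfer via the projections $\pi_n\colon c_0(X)\to X$, which are $\mathfrak u_0$-continuous because $\mathfrak p_0\le\mathfrak u_0$: a character $\chi\in X^\wedge$ separating $x_n$ from $y_n$ yields $\chi\circ\pi_n\in c_0(X)^\wedge$ separating $\mathbf x$ from $\mathbf y$, and for quasi-convex $V\subset X$ the set $V^\N\cap c_0(X)$ is quasi-convex in $c_0(X)$, witnessed by $\chi\circ\pi_n$ at any coordinate $n$ where membership in $V$ fails.

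The genuinely new ingredients are separability (which $(X^\N,\mathfrak u)$ famously fails by Proposition \ref{descr1}(c$_1$)) and non-discreteness together with connectedness. For separability, given a countable dense $D\subset X$ with $\theta\in D$, the countable set $D^{(\N)}$ of eventually-$\theta$ sequences in $D$ is $\mathfrak u_0$-dense: truncate $\mathbf x\in c_0(X)$ at some $N$ beyond which $x_n\in V$ (for a preassigned symmetric $V\in\mathcal N(X)$), and replace each earlier $x_n$ by a $V$-close element of $D$. Non-discreteness is immediate: if $X$ is not discrete then every $V\in\mathcal N(X)$ contains a nonzero $x$, and $(x,\theta,\theta,\dots)$ is a nonzero element of $V^\N\cap c_0(X)$.

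Connectedness is the step I expect to be the main obstacle, since it does not reduce to a coordinatewise argument and gets no direct input from Proposition \ref{descr1}. The idea is to show that $X^{(\N)}$ is both $\mathfrak u_0$-dense in $c_0(X)$ (by the same truncation trick as for separability) and connected in its own right. Setting $X_n:=\{\mathbf x\in X^{(\N)}:x_m=\theta\text{ for all }m>n\}$, I would check that $\mathfrak u_0$ restricts on $X_n$ to the product topology (the basic neighborhoods $V^\N\cap X_n$ correspond to $V^n$ under the identification $X_n\cong X^n$), so each $X_n$ is connected as a finite product of the connected group $X$. The chain $X_1\subset X_2\subset\cdots$ shares the point $\theta$, so $X^{(\N)}=\bigcup_n X_n$ is connected, and taking the $\mathfrak u_0$-closure preserves connectedness. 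This finishes (c) and the proposition.
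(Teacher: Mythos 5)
Your argument is correct, and it follows the natural route; note that the paper itself quotes Proposition \ref{descr2m} from \cite{DMT} without proof, so there is no in-text argument to diverge from. All the key steps check out: the subspace/closed-subgroup reductions to Proposition \ref{descr1}, the coordinatewise transfer of MAP and quasi-convexity via the projections $\pi_n$, the truncation argument giving density of $D^{(\N)}$ (resp.\ $X^{(\N)}$) for separability (resp.\ connectedness), and the identification of $X_n\cong X^n$ with the product topology. The only cosmetic point is that you phrase the closedness of $c_0(X)$ in $(X^\N,\mathfrak u)$ in terms of convergent sequences, which only yields sequential closedness when $X$ is non-metrizable; since your $V+V\subset U$ argument applies verbatim to an arbitrary point of the closure, you should state it that way so that completeness (not merely sequential completeness) transfers via the closed-subgroup lemma.
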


   \begin{remark}{\em Let $X$ be the additive group  $\mathbb R$ with the usual topology.
   \par
   (1) By Proposition \ref{descr1} $(\mathbb R^{\mathbb N},\mathfrak{u})$ is a complete metrizable topological abelian group. Note that although $\mathbb R^{\mathbb N}$ is a vector space over $\mathbb R$, $(\mathbb R^{\mathbb N},\mathfrak{u})$ {\bf is not} a topological vector space
 over $\mathbb R$. The group  $(\mathbb R^{\mathbb N},\mathfrak{u})$ is not connected; the connected component of the null element coincides with $l_{\infty}$ and the topology $\mathfrak{u}|_{l_{\infty}}$ is the usual Banach-space topology of $l_{\infty}$.
 \par
 (2) By Proposition \ref{descr2m}  $(c_0(\mathbb R),\mathfrak{u}_0)$ is a complete separable metrizable topological abelian group. Note that  $c_0(\mathbb R)$ is a vector space over $\mathbb R$ and  $(c_0(\mathbb R),\mathfrak{u}_0)$ {\bf is}  a topological vector space
 over $\mathbb R$. The topology $\mathfrak{u}_0$ is the usual Banach-space topology of $c_0$.
 \par
 (3) It is easy to see that $\mathbb Z^{(\mathbb N)}$ is a closed subgroup of  $(c_0(\mathbb R),\mathfrak{u}_0)$ and the quotient group
 $$(c_0(\mathbb R),u_0)/\mathbb Z^{(\mathbb N)}$$
  is topologically isomorphic with $(c_0(\mathbb S),\mathfrak{u}_0)$.
   }
   \end{remark}

   \begin{remark}\label{Ro}{\em The topology of $(c_0(\mathbb S),\mathfrak{u}_0)$ can be
induced by the invariant metric $\rho_0$ defined by the equality
$$
\rho_0({\bf x},{\bf y})=\sup_{n\in \mathbb N}|x_n-y_n|,\quad {\bf x},{\bf
y}\in c_0(\mathbb S)\,.
$$
  It seems that the metric   group $(c_0(\mathbb S),\rho_0)$ was first considered by   Rolewicz in \cite{Rol}, where he proves that it is a monothetic group.  As he underlines,  a monothetic and completely metrizable group need not be compact  or discrete, a fact   which  breaks the dichotomy existing in the class of LCA-groups: namely,  a monothetic LCA-group must be  either compact or discrete (\cite[ Lemme 26.2 (p. 96)]{W}; see also \cite[Remark 5]{anz}, where a construction of a different example of a complete metrizable monothetic non-locally compact group is indicated).
 In \cite{Ni} it is observed that $|(c_0(\mathbb S), d_0)^{\wedge}|=\aleph_0$.\\
 A proof of the fact that $(c_0(\mathbb S),\rho_0)$  is monothetic  and $|(c_0(\mathbb S), \rho_0)^{\wedge}|=\aleph_0$ is contained also in \cite[pp. 20--21]{DPS}. In  \cite{Gabr} it is shown further that $(c_0(\mathbb S),\rho_0)$ is a Pontryagin reflexive group. }
\end{remark}

 The following statement provides, in particular, a wide class of non-compact Polish locally quasi-convex topological abelian groups with countable dual .
   \begin{Pro}\label{dikrfor2}{\em \cite{DMT}} For an infinite  locally compact Hausdorff topological abelian group $X$ TFAE:
\begin{itemize}
     \item[(i)]  $X$ is  compact connected and metrizable.
     \item[(ii)]  $|(c_0(X), \mathfrak{u}_0)^{\wedge}|=\aleph_0$.
\end{itemize}
\end{Pro}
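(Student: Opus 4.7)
The plan is to establish the two implications separately, with the harder direction being $(i) \Rightarrow (ii)$.

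For $(ii) \Rightarrow (i)$ I would argue contrapositively by splitting into three cases according to how $X$ could fail to be compact, connected, or metrizable. If $X$ is non-compact, or compact but non-metrizable, then $X^\wedge$ is uncountable (by the classical fact that, for $X$ locally compact abelian, $X^\wedge$ is countable precisely when $X$ is compact metrizable), and each $\psi \in X^\wedge$ yields a distinct continuous character of $(c_0(X),\mathfrak{u}_0)$ via the first-coordinate evaluation $(x_n)\mapsto \psi(x_1)$. If $X$ is compact metrizable but disconnected, then $X^\wedge$ contains a character $\psi$ of some finite order $m\ge 2$; its kernel $K=\ker\psi$ is clopen in $X$, and for each $(a_n)\in \{0,\dots,m-1\}^\N$ the formula $\chi_{(a_n)}((x_n)):=\prod_n \psi(x_n)^{a_n}$ defines a continuous character of $c_0(X)$, since $x_n\to 0$ forces $x_n\in K$ eventually (making the product finite) and $K^\N\cap c_0(X)$ is a $\mathfrak{u}_0$-neighbourhood of zero annihilated by $\chi_{(a_n)}$. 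Distinct sequences $(a_n)$ give distinct characters, producing $2^{\aleph_0}$ elements in the dual.

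For $(i)\Rightarrow (ii)$, assume $X$ is compact connected metrizable, so $X^\wedge$ is countably infinite. Given a continuous character $\chi$ of $(c_0(X),\mathfrak{u}_0)$, set $\chi_n(x):=\chi(e_n(x))$, where $e_n\colon X\to c_0(X)$ places $x$ at the $n$-th coordinate. Since $X^{(\N)}$ is $\mathfrak{u}_0$-dense in $c_0(X)$, $\chi$ is determined by the sequence $(\chi_n)\in (X^\wedge)^\N$. The key claim is that $\chi_n=1$ for all but finitely many $n$; granted this, $(c_0(X),\mathfrak{u}_0)^\wedge$ injects into $(X^\wedge)^{(\N)}$, which is countable, while the reverse inequality $\ge \aleph_0$ is obvious.

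To prove the key claim, pick a symmetric neighbourhood $V$ of $0$ in $X$ with $\chi(V^\N\cap c_0(X))\subseteq \S_+$. Applying $\chi$ to each single-coordinate sequence $e_n(x)$ with $x\in V$ shows $\chi_n(V)\subseteq \S_+$ for every $n$, so each $\chi_n$ lies in the quasi-convex polar $V^\triangleright:=\{\psi\in X^\wedge:\psi(V)\subseteq \S_+\}$. For compact $X$, $X^\wedge$ is discrete in the compact-open topology (because $\S$ has no small subgroups), and an Ascoli/equicontinuity argument makes $V^\triangleright$ compact, hence \emph{finite}. If $\chi_n\ne 1$ for infinitely many $n$, pigeonholing in the finite set $V^\triangleright\setminus\{1\}$ yields some $\chi^*\ne 1$ with $\chi_n=\chi^*$ for $n$ in an infinite set $\{n_1<n_2<\dots\}$. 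Since $X$ is connected, any neighbourhood of $0$ generates $X$, so some $x\in V$ satisfies $\chi^*(x)\ne 1$; because $\chi^*\in V^\triangleright$, necessarily $\chi^*(x)\in \S_+\setminus\{1\}$. Taking $w^{(N)}\in c_0(X)$ to be the finitely supported sequence with $w^{(N)}_{n_j}=x$ for $j\le N$ and zero elsewhere, one has $w^{(N)}\in V^\N\cap c_0(X)$ and $\chi(w^{(N)})=\chi^*(x)^N$. But $\S_+$ contains no nontrivial subgroup of $\S$, so the cyclic group $\langle \chi^*(x)\rangle$ is not entirely inside $\S_+$, and some $\chi^*(x)^N$ escapes $\S_+$, contradicting the choice of $V$.

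The main obstacle is the finiteness of $V^\triangleright$, which crucially requires compactness of $X$; beyond that, connectedness of $X$ is needed to exhibit the witness $x\in V$ with $\chi^*(x)\ne 1$, and the elementary fact that $\S_+$ contains no nontrivial subgroup of $\S$ drives the final contradiction. Metrizability of $X$ is used solely to ensure $X^\wedge$ is countable, so that the ``finite support'' conclusion caps the cardinality of the dual at $\aleph_0$.
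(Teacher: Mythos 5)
The paper does not actually prove Proposition~\ref{dikrfor2}; it only cites \cite{DMT} for it, so there is no in-paper argument to compare yours against. Your proof is correct and is essentially the standard one for this result: for $(ii)\Rightarrow(i)$ you correctly reduce non-compactness/non-metrizability to the uncountability of $X^\wedge$ (which is Proposition~\ref{codu1} of the paper applied to $X$) and handle disconnectedness via a torsion character $\psi$ whose clopen kernel $K$ makes $K^\N\cap c_0(X)$ an open subgroup annihilated by the $2^{\aleph_0}$ characters $\prod_n\psi(x_n)^{a_n}$; for $(i)\Rightarrow(ii)$ the ``finite support'' claim for the coordinate characters $\chi_n$ is exactly the key lemma, and your contradiction via $\chi(w^{(N)})=\chi^*(x)^N\notin\S_+$ is sound because $\S_+$ contains no nontrivial subgroup of $\S$. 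The only step you state rather tersely is the finiteness of the polar $V^\triangleright$: it does follow, as you say, from equicontinuity of polars of neighbourhoods (hence compactness in the compact-open topology) together with discreteness of $X^\wedge$ for compact $X$, but a reader would want those two standard facts spelled out or referenced (e.g.\ \cite{B} or \cite{HR}), since a naive attempt to prove finiteness directly from connectedness runs into circularity. All remaining steps (density of $X^{(\N)}$ in $(c_0(X),\mathfrak u_0)$, the injection of the dual into the countable group $(X^\wedge)^{(\N)}$, and the use of connectedness to find $x\in V$ with $\chi^*(x)\ne 1$) are correct.
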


Our  Theorem \ref{me-th} shows, in particular, that in the implication $(i)\Longrightarrow (ii)$ of Proposition \ref{dikrfor2} the group
 $(c_0(X), \mathfrak{u}_0)$ cannot be replaced by the group $(X^\mathbb N,\mathfrak{u})$ .

\section{Auxiliary statements and proof of Theorem \ref{me-th}}

We will need  the following refinement of  item (c$_1$) of Proposition \ref{descr1}.

  \begin{Pro}\label{NEW}
 Let $X$ be a Hausdorff topological  group and $G:=(X^\N,\mathfrak{u})$. Then $d(G)\leq d(X)^{\aleph_0}$.   \end{Pro}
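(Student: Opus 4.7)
The plan is to exhibit an explicit dense subset of $G=(X^\N,\mathfrak{u})$ whose cardinality is bounded by $d(X)^{\aleph_0}$. Let $\kappa:=d(X)$ and fix a dense subset $D\subseteq X$ with $|D|\le \kappa$. I would consider the set $D^\N\subseteq X^\N$, which has $|D^\N|\le\kappa^{\aleph_0}=d(X)^{\aleph_0}$, and claim that it is dense in $(X^\N,\mathfrak{u})$.

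To verify the claim, recall that, by the definition of $\mathfrak{u}$, the family $\{V^\N:V\in\mathcal N(X)\}$ is a basis of neighborhoods at $e=(\theta,\theta,\dots)$, so after translation the sets $(x_n)_{n\in\N}+V^\N$ form a basis of neighborhoods at any point $(x_n)_{n\in\N}\in X^\N$. Given such a basic neighborhood, the density of $D$ in $X$ lets me pick, independently for each $n\in\N$, some $d_n\in D\cap(x_n+V)$; the resulting sequence $(d_n)_{n\in\N}$ then lies in $D^\N\cap\bigl((x_n)_{n\in\N}+V^\N\bigr)$, which proves density.

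Combining these two observations yields $d(G)\le|D^\N|\le d(X)^{\aleph_0}$. The entire argument hinges on a single structural feature of the uniform topology: its basic $0$-neighborhoods have the form $V^\N$ with the \emph{same} $V$ in every coordinate, so a single choice of $V\in\mathcal N(X)$ governs the approximation in all coordinates simultaneously. Consequently, coordinate-wise approximation by elements of $D$ suffices, and there is no genuine obstacle; the potentially degenerate cases (e.g.\ $X=\{0\}$, or $X$ finite, in which case $X$ is automatically discrete as a Hausdorff group and $\mathfrak{u}$ is the discrete topology on $X^\N$) are handled by the same inequality.
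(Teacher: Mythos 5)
Your proposal is correct and follows exactly the paper's argument: take a dense $D\subseteq X$ with $|D|=d(X)$ and verify that $D^\N$, of cardinality at most $d(X)^{\aleph_0}$, is dense in $(X^\N,\mathfrak{u})$ using the basic neighborhoods $V^\N$. You have merely written out the coordinate-wise approximation step that the paper dismisses as following ``immediately from the definitions.''
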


 \begin{proof}
Let $D$ be  a dense subset of $X$ of size $d(X)$. It suffices to show that $D^\N$ is dense in $G$. This follows immediately
from the definitions.   \end{proof}

We will also use later   the following known statement.
\begin{Pro}\label{NEW1}
Let $X$ be a compact Hausdorff topological  group.
\par
$(a)$ If  $|X|\le 2^\cont$, then $d(X)\le \cont$.
\par
$(b)$ If $X$ is infinite, then $|X| = 2^{w(X)}$.
 \end{Pro}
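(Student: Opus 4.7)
My plan is to prove (b) first by a direct Pontryagin-duality calculation, then derive (a) from it combined with a classical cardinal inequality for compact groups.

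For (b), I would assume $X$ is infinite. By \cite[Theorem 24.15]{HR}, $w(X) = |X^\wedge|$, and $X^\wedge$ is discrete because $X$ is compact. Moreover $X^\wedge$ is infinite, since Pontryagin duality identifies $X$ with $(X^\wedge)^\wedge$ and $X$ is infinite. Applying Fact \ref{kak} to the infinite discrete group $X^\wedge$ gives $|(X^\wedge)^\wedge| = 2^{|X^\wedge|}$, so Pontryagin duality yields $|X| = 2^{|X^\wedge|} = 2^{w(X)}$.

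For (a), the case of finite $X$ is trivial. If $X$ is infinite, part (b) gives $|X| = 2^{w(X)} \le 2^\cont$, whence $w(X) \le 2^{w(X)} \le 2^\cont$. I then appeal to the classical equality $d(G) = \log w(G)$ for every infinite compact Hausdorff topological group $G$, where $\log\kappa := \min\{\mu : 2^\mu \ge \kappa\}$. Since $\mu = \cont$ satisfies $2^\mu \ge 2^\cont \ge w(X)$, we obtain $d(X) = \log w(X) \le \cont$.

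The main obstacle is precisely this $\log$-formula, rather than the trivial $d(G) \le w(G)$: indeed, the hypothesis $2^{w(X)} \le 2^\cont$ does \emph{not} imply $w(X) \le \cont$ in ZFC (the map $\kappa \mapsto 2^\kappa$ need not be strictly increasing, e.g.\ in models where $2^{\cont^+} = 2^\cont$). However, $d(G) = \log w(G)$ is a well-known classical result for compact Hausdorff groups, ultimately relying on Hewitt-Marczewski-Pondiczery applied through the embedding $X \hookrightarrow \T^{w(X)}$; since the proposition is labelled ``known'', a citation to \cite{HR} or a similar source should suffice.
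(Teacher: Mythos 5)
Your proof of part (b) is essentially identical to the paper's: discreteness of $X^\wedge$, $w(X)=|X^\wedge|$ from \cite[(24.15)]{HR}, Pontryagin duality $X\cong (X^\wedge)^\wedge$, and Fact \ref{kak} applied to the infinite discrete group $X^\wedge$. For part (a) you take a genuinely different (though ultimately equivalent) route. The paper gets $w(X)\le 2^\cont$ directly from $w(X)\le |X|$ \cite[(3.1.21)]{Eng}, then invokes the Ivanovskii--Kuzminov theorem to realize $X$ as a continuous image of $\{0,1\}^{2^\cont}$ and pushes the dense set of size $\le\cont$ (Hewitt--Marczewski--Pondiczery) forward through the surjection. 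You instead bound $w(X)\le 2^{w(X)}=|X|\le 2^\cont$ via (b) and then cite the classical formula $d(G)=\log w(G)$ for infinite compact groups as a black box; your observation that one cannot deduce $w(X)\le\cont$ in ZFC, so that the na\"ive bound $d\le w$ is insufficient, is exactly the right point, and the paper's argument is likewise arranged to avoid that trap. The two approaches buy the same thing, since the nontrivial half of $d(G)=\log w(G)$ \emph{is} the Ivanovskii--Kuzminov-plus-HMP argument the paper writes out. One caveat: your parenthetical justification of the $\log$-formula via the embedding $X\hookrightarrow\T^{w(X)}$ does not work as stated, because density is not monotone under passage to subspaces (a subgroup of a separable group need not be separable); the correct mechanism is the continuous \emph{surjection} from a Cantor cube, which also frees the result from the abelian hypothesis. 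Also, the formula is not in \cite{HR}; cite \cite{AT} (where the Ivanovskii--Kuzminov theorem appears as Theorem 4.1.7) instead. Since you only use the formula as a correctly stated known result, the proof stands.
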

\begin{proof}
$(a)$.
From $w(X)\le |X|$ (see \cite[(3.1.21)]{Eng}) and  $|X|\le 2^\cont$  we have: $w(X)\le 2^\cont$.
From the last inequality according to Ivanovskii-Kuzminov theorem \cite[Theorem 4.1.7 (p. 222)]{AT} we get the existence of a continuous surjection $f:\{0,1\}^{ 2^\cont}\to X$.
By \cite[(2.3.15)]{Eng} $d(\{0,1\}^{ 2^\cont})\le \cont$. Consequently, $d(X)=d(f(\{0,1\}^{ 2^\cont}))\le d(\{0,1\}^{ 2^\cont})\le \cont$.

$(b)$
Let $D$ be the group $X^{\wedge}$ endowed with the compact-open topology. Then it is not hard to see that $D$ is a discrete group.
  Endow $D^{\wedge}$ with the compact-open topology (which, in this case, coincides with the topology of point-wise convergence). It follows easily from Tikhonov-product theorem that $D^{\wedge}$ is compact Hausdorff topological group.
The powerful Pontryagin duality theorem implies that the compact groups $D^{\wedge}$ and $X$ are topologically isomorphic:  $D^{\wedge}\cong X$.
By \cite[(24.15)]{HR} we have: $w(X)=|D|$. Since $X$ is infinite, $D$ is infinite as well; from this and $X\cong D^{\wedge}$ by Fact \ref{kak} we have: $|X| = 2^{|D|}$. From this equality, as $w(X)=|D|$, we get $(b)$.
\end{proof}

%
 In the sequel we deal with cardinals larger than $\cont$ and even $2^\cont$.
We need to recall several standard definitions on cardinals.

\begin{definition} Let $\kappa$ be a cardinal.
\begin{itemize}
\item
 $\kappa^+$ will denote the successor  of $\kappa$.
\item
 $\kappa$ is called a {\em strong limit cardinal}, if $2^\lambda < \kappa$ for all $\lambda < \kappa$.
\end{itemize}
\end{definition}

Clearly, a strong limit cardinal $\kappa$ is a limit cardinal (i.e., $\kappa$ is not of the form  $\lambda^+$ for any cardinal $\lambda$). Obviously, $\aleph_0$ is a strong limit cardinal.
To obtain the next strong limit cardinal one has to go a long way. To this end let
$$
\beth_0 = \aleph_0\ \mbox{ and }\beth_{n+1}=2^{\beth_{n}} \ \mbox{  for all } \  n \in \N.
$$
Then $\beth_\omega := \sup_{n \in \N} \beth_{n}$ is the smallest uncountable strong limit cardinal. From Proposition \ref{NEW1}$(b)$ one can easily deduce that the cardinality of an infinite compact group is never a strong limit cardinal.
In particular, if $X$ is a compact Hausdorff topological  group, then  $|X|\ne \beth_\omega$.

\begin{Pro}\label{NEW2}
 Let $X$ be a Hausdorff topological  group and $G:=(X^\N,\mathfrak{u})$. Then:
 \par
 $(a)$ $
|{\rm{CHom}}(G,X)|  \leq  |X|^{d(X)^{\aleph_0}}$.
\par
$(b)$ If $X$ is compact and $|X|\le 2^\cont$, then $|{\rm{CHom}}(G,X)| \leq 2^\cont$.
\par
$(c)$ If $X$ is compact and $ 2^\cont<|X|<\beth_\omega$, then under the assumption of the Generalized Continuum  Hypothesis (GCH) we have:  $|{\rm{CHom}}(G,X)| \leq |X|$.
 \end{Pro}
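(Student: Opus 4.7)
The common strategy for all three parts is the same: a continuous map from $G$ into the Hausdorff group $X$ is determined by its restriction to any dense subset of $G$, so $|{\rm{CHom}}(G,X)| \leq |X|^{d(G)}$. Combining this with Proposition \ref{NEW}, which gives $d(G) \leq d(X)^{\aleph_0}$, immediately produces (a). Parts (b) and (c) are then obtained by estimating $|X|^{d(X)^{\aleph_0}}$ under their respective additional hypotheses, exploiting compactness of $X$ through Proposition \ref{NEW1}.

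For (b), compactness of $X$ together with $|X|\leq 2^{\cont}$ gives $d(X)\leq \cont$ by Proposition \ref{NEW1}(a). Hence $d(X)^{\aleph_0}\leq \cont^{\aleph_0}=\cont$, and (a) yields
$$|{\rm{CHom}}(G,X)|\leq |X|^{d(X)^{\aleph_0}} \leq (2^{\cont})^{\cont} = 2^{\cont}.$$

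For (c), I set $\kappa:=w(X)$ and use $|X|=2^{\kappa}$ from Proposition \ref{NEW1}(b). The assumption $|X|>2^{\cont}$ forces $\kappa > \aleph_0$, and $|X| < \beth_{\omega}$, combined with $\beth_{\omega}=\aleph_{\omega}$ under GCH, forces $\kappa<\aleph_{\omega}$. Thus $\kappa=\aleph_n$ for some $n\geq 1$, which, being a successor cardinal, is regular with cofinality $\kappa > \aleph_0$. Under GCH this yields $\kappa^{\aleph_0}=\kappa$, and since $d(X)\leq \kappa$ the same arithmetic (the subcase $d(X)=\aleph_0$ being handled by $\aleph_0^{\aleph_0}=\aleph_1\leq \kappa$) gives $d(X)^{\aleph_0}\leq \kappa$. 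Therefore (a) gives
$$|{\rm{CHom}}(G,X)|\leq |X|^{d(X)^{\aleph_0}}\leq |X|^{\kappa}=(2^{\kappa})^{\kappa}=2^{\kappa}=|X|.$$

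The main subtlety I expect is the delicate cardinal arithmetic in (c). The upper bound $|X|<\beth_{\omega}$ is there precisely to prevent $w(X)$ from having countable cofinality: were $w(X)=\aleph_{\omega}$, for instance, one would get $w(X)^{\aleph_0}=w(X)^{+}$ under GCH and the crucial estimate $|X|^{w(X)}=|X|$ would fail. Once one ensures that $\kappa$ is a regular uncountable cardinal, the rest is a routine concatenation of the density-principle for continuous maps into Hausdorff spaces, Propositions \ref{NEW} and \ref{NEW1}, and standard GCH-arithmetic.
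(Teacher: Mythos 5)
Your proof is correct and follows essentially the same route as the paper: part (a) via the density bound $|{\rm{CHom}}(G,X)|\le|X|^{d(G)}$ combined with Proposition \ref{NEW}, part (b) via Proposition \ref{NEW1}(a), and part (c) via $|X|=2^{w(X)}$ from Proposition \ref{NEW1}(b) plus GCH arithmetic (the paper computes $w(X)^{\aleph_0}=(2^{\beth_{m-2}})^{\aleph_0}=\beth_{m-1}$ directly, where you invoke regularity of $\kappa=\aleph_n$; these are equivalent). One cosmetic slip in your closing remark: the estimate that would fail at $w(X)=\aleph_\omega$ is $|X|^{w(X)^{\aleph_0}}=|X|$, not $|X|^{w(X)}=|X|$ (the latter always holds since $(2^\kappa)^\kappa=2^\kappa$); your actual proof is unaffected.
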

  \begin{proof}
  $(a)$ Denote by ${\rm{C}} (G,X)$ the set of all continuous mappings $f:G\to X$;  it is easy to see that
  $
  |{\rm{C}} (G,X)|\le |X|^{d(G)}\,.
  $
  From this inequality and Proposition \ref{NEW} we get: $|{\rm{C}} (G,X)|\le |X|^{d(X)^{\aleph_0}} $.
  Consequently,
  $$
  |{\rm{CHom}} (G,X)|\le |{\rm{C}} (G,X)|\le |X|^{d(X)^{\aleph_0}}\,.
  $$
  \par
  $(b)$ From $|X|\le 2^\cont$ by Proposition \ref{NEW1}$(a)$ we have: $d(X)\le \cont$. From the inequalities $|X|\le 2^\cont$, $d(X)\le \cont$ and from $(a)$ we obtain: $|{\rm{CHom}}((G,X)|  \leq  |X|^{d(X)^{\aleph_0}} \le (2^\cont)^{\cont^{\aleph_0}}=2^\cont\,.$
  \par
  $(c)$ Let us recall that according to GCH, one has $2^\kappa = \kappa^+$ for every infinite cardinal $\kappa$.   In particular, every uncountable $\kappa< \beth_\omega$ has the form $\kappa= \beth_m$ for some $m\geq 1$. In particular, the hypothesis $ 2^\cont<|X|< \beth_\omega$ and the fact that $ 2^\cont = \beth_2$ imply that   $|X| = \beth_m$ for some $m>2$. By   Proposition \ref{NEW1}$(b)$, it follows  that $w(X)= \beth_{m-1}$, with $m-1 > 1$. So $w(X) = 2^{\beth_{m-2}}$ and consequently,
 $$
 w(X)^{\aleph_0}=(2^{\beth_{m-2}}) ^{\aleph_0} =2^{\beth_{m-2}}= \beth_{m-1}\ \mbox{ and }  \ d(X)^{\aleph_0} \leq w(X)^{\aleph_0} = \beth_{m-1}.
 $$
   Using the latter inequality and item (a), we get
$$
|{\rm{CHom}}(G,X)| \leq |X|^{ d(X)^{\aleph_0}}\leq  \beth_{m}^{ \beth_{m-2}} = (2^{ \beth_{m-1}})^{\beth_{m-2}}
=2^{ \beth_{m-1}\cdot  \beth_{m-2}}  = 2^{ \beth_{m-1}} = \beth_{m}= |X|.
$$
  \end{proof}

We prove in Theorem \ref{me-th} that in Proposition \ref {NEW2}$(b)$ is practically an equality.\\ The groups $X$ for which in Proposition \ref {NEW2}$(c)$ can occur an equality will be treated elsewhere.\\
 
 {\bf Proof of Theorem \ref{me-th}. }\\
  $(a)$.
Denote by  $\mathfrak F$ the set of all ultrafilters on $\mathbb N$.\\
It is known that
\begin{equation}\label{2sep0}
\rm{Card}(\mathfrak F)=2^{\cont}\, .
\end{equation}
 For a  filter $\mathcal F$ on $\mathbb N$, $(x_n)_{n\in \mathbb N}\in {X}^\mathbb N$ and $x\in X$ we write:
$$
\lim_{n,{\mathcal F}}x_n=x
$$
if for every $W\in \mathcal N(X)$ there is $F\in \mathcal F$ such that $x_n-x\in W,\,\forall n\in F$.

Since $X$ is compact Hausdorff, it follows that for every $\mathcal F\in \mathfrak F$ and $(x_n)_{n\in \mathbb N}\in {X}^\mathbb N$ there exists a unique $x\in X$ such that $
\lim_{n,{\mathcal F}}x_n=x$.
This follows from the fact that the sets $A_F:=\{x_n: n\in F\}$, when $F$ runs over $\mathcal F$,
give rise to a filter base on $X$ that generates an ultrafilter $\mathcal F^*$ on $X$ that has a unique limit point $x$.
\par
 For a filter  $\mathcal F\in \mathfrak F$ define the mapping $\chi_{{\mathcal F}}:{ X}^\mathbb N\to X$ by
equality:
$$
\chi_{\mathcal F}({\bf x})=\lim_{n,{\mathcal F}}x_n,\quad \forall {\bf x}=(x_n)_{n\in \mathbb N}\in {X}^\mathbb N\,.
$$
 Then
 \begin{equation}\label{2sep}
 \chi_{\mathcal F}\in {\rm{CHom}} (G,X)\quad \forall \mathcal F\in \mathfrak
 F\,.
 \end{equation}
When $\mathcal F$ is the principal ultrafilter on $\N$ generated by a fixed $n \in \N$, then  $ \chi_{\mathcal F}$ is simply
the projection $p_n: X^\N  \to X$ on the $n$-th coordinate, so (\ref{2sep}) is obvious.
 To verify (\ref{2sep}) in the general case, fix $\mathcal F\in \mathfrak
 F\,.$ As
$$
\chi_{\mathcal F}({\bf x}+{\bf y})=\lim_{n,{\mathcal
F}}(x_n+y_n)=
\lim_{n,{\mathcal F}}x_n+\lim_{n,{\mathcal
F}}y_n=\chi_{\mathcal F}({\bf x})+\chi_{\mathcal F}({\bf y}),\quad
\forall {\bf x},{\bf y}\in {X}^\mathbb N\,,
$$
we conclude that $\chi_{\mathcal F}\in {\rm{Hom}} (X^{\N},X)$. To see that $\chi$ is continuous on  $(X^{\mathbb N},\mathfrak{u})$, fix a
closed $W\in \mathcal N(X)$. Since $W$ is closed, for ${\bf x}=(x_n)_{n\in \mathbb N}\in {W}^\mathbb N$ we shall have
$$
\chi_{\mathcal F}({\bf x})=\lim_{n,{\mathcal F}}x_n\in W\,.
$$
Consequently, $\chi_{\mathcal F}(W^{\mathbb N})\subset W$. From this relation, as $W^{\mathbb N}\in \mathcal N(X^{\mathbb N},\mathfrak{u})$,
we get that $\chi_{\mathcal F}$ is continuous on  $(X^{\mathbb N},\mathfrak{u})$ and (\ref{2sep}) is proved.
\par
We  also have:
\begin{equation}\label{2sep2}
 \mathcal F_1\in \mathfrak F,\,\mathcal F_2\in \mathfrak F,\,\,\mathcal F_1\ne \mathcal F_2\, \Longrightarrow\,\chi_{\mathcal F_1} \ne \chi_{\mathcal F_2}
\end{equation}
 In fact, as $\mathcal F_1$ and $\mathcal F_2$ are {\it distinct} ultrafilters, there is $F\in  \mathcal F_1$ such that $F\not\in  \mathcal F_2$. Let  ${\bf x}=(x_n)_{n\in \mathbb N}\in
{X}^\mathbb N\,$ be defined by conditions: $x_n=0$ if $n\in F$ and $x_n=a\ne 0$ if $n\in \mathbb N\setminus F$. Then $\chi_{\mathcal
F_1}({\bf x})=0$ and $\chi_{\mathcal F_2}({\bf x})=a$. Therefore, $\chi_{\mathcal F_1} \ne \chi_{\mathcal F_2}$ and (\ref{2sep2}) is proved.
 \par
Clearly (\ref{2sep0}),(\ref{2sep})  and (\ref{2sep2}) imply that $|{\rm{CHom}} (G,X)|\ge 2^{\cont}$.
\par
$(b)$ follows from $(a)$ and from Proposition \ref{NEW2}$(b)$.\\
$(c)$ follows immediately from $(b)$.   \hfill $\Box$
\begin{remark}\label{OQ}{\em In notation of  Theorem \ref{me-th} and its proof  let
$$
\Gamma:=\{\chi_{\mathcal F}:{\mathcal F}\in \mathfrak F\}\,.
$$
Denote by  $\langle\Gamma\rangle$ the subgroup of ${\rm{CHom}} (G,X)$ generated by the set $\Gamma$.
As $|\Gamma|=2^\cont$, we have also that $|\langle\Gamma\rangle|=2^\cont$; so, in view of Theorem \ref{me-th}$(b)$, for $X$ with
$|X|\leq 2^{\cont}$ we have the equality:  $|\langle\Gamma\rangle|=2^\cont=|{\rm{CHom}} (G,X)|$.\\
We do not know whether for $X$ with
$|X|\leq 2^{\cont}$ we have the equality  $\langle\Gamma\rangle={\rm{CHom}} (G,X)$ as well.
}

\end{remark}
\begin{remark}\label{OQ2}{\em It follows from Proposition \ref{descr1}$(a_2)$ that on $\S^N$ the box topology $\mathfrak b$ is strictly finer than the uniform topology $\mathfrak u$. This implies that we have the set-theoretic inclusion
\begin{equation}\label{ub}
\left(\S^N,  \mathfrak u \right)^\wedge \subset \left(\S^N,  \mathfrak b \right)^\wedge\,.
\end{equation}
From Fact \ref{kak} we get:
\begin{equation}\label{ub2}
|{\rm{Hom}} (\S^N,\S)|=2^\cont\,.
\end{equation}
From (\ref{ub}), (\ref{ub2}) and Theorem \ref{me-th}$(c)$ we obtain:
\begin{equation}\label{ub3}
|\left(\S^N,  \mathfrak u \right)^\wedge|=2^\cont=|\left(\S^N,  \mathfrak b \right)^\wedge|\,.
\end{equation}
This equality shows that from the pure cardinality arguments it is not possible to conclude  that in (\ref{ub}) we have the strict inclusion.
Nevertheless, we conjecture that the inclusion in (\ref{ub}) is strict, i.e., $\left(\S^N,  \mathfrak u \right)^\wedge \ne  \left(\S^N,  \mathfrak b \right)^\wedge$.

}
\end{remark}

{\bf Acknowledgements.} We are grateful to S. S. Gabriyelyan (Ben-Gurion University of the Negev,
  Beer-Sheva,  Israel) for useful discussions while preparing    this manuscript.


\par

{\bf Addresses:}\\
\\
Dikran Dikranjan\\
 Dipartimento di Matematica e Informatica,
 Universit\`{a} di Udine, Via delle Scienze  206,
  Localit\` a Rizzi
  33100 Udine, Italy\\
 e-mail:  dikran.dikranjan@uniud.it\\
 \\
Elena Mart\'in-Peinador\\
Departamento de Geometr\'{\i}a y Topolog\'{\i}a, Universidad Complutense de Madrid,\\
28040 Madrid,\\
 Spain\\
e-mail: em\_peinador@mat.ucm.es\\
\\
Vaja Tarieladze\\
 Niko Muskhelishvili Institute of Computational Mathematics\\
 of the Georgian Technical University\\
 8, Akuri str. 0160 Tbilisi,\\
   Georgia\\
  e-mail: vajatarieladze@yahoo.com

\end{document}